
\documentclass[a4paper, 10 pt]{ieeeconf}  

\usepackage{graphicx} 
\usepackage{amsmath} 
\usepackage{amssymb}  
\usepackage{dsfont}
\usepackage{mathtools}

\def\longrightharpoonup{\relbar\joinrel\rightharpoonup}
\def\longleftharpoondown{\leftharpoondown\joinrel\relbar}

\def\longrightleftharpoons{
  \mathop{
    \vcenter{
      \hbox{
      \ooalign{
        \raise1pt\hbox{$\longrightharpoonup\joinrel$}\crcr
	  \lower1pt\hbox{$\longleftharpoondown\joinrel$}
	  }
      }
    }
  }
}

\newtheorem{thm}{Theorem}
\newtheorem{lem}{Lemma}
\newtheorem{newdef}{Definition}

\title{\LARGE \bf Optimization-based Lyapunov function construction for continuous-time Markov chains with affine transition rates}

\author{Andreas Milias-Argeitis, Mustafa Khammash %
\thanks{A. Milias-Argeitis and Mustafa Khammash  are with the
Department of Biosystems Science and Engineering, ETH Zurich, Mattenstrasse 26, 4058, Basel, Switzerland
        }%
}

\begin{document}

\maketitle
\thispagestyle{empty}
\pagestyle{empty}

\begin{abstract}
We address the problem of Lyapunov function construction for a class of continuous-time Markov chains with affine transition rates, typically encountered in stochastic chemical kinetics. Following an optimization approach, we take advantage of existing bounds from the Foster-Lyapunov stability theory to obtain functions that enable us to estimate the region of high stationary probability, as well as provide upper bounds on moments of the chain. Our method can be used to study the stationary behavior of a given chain without resorting to stochastic simulation, in a fast and efficient manner.
\end{abstract}

\section{Introduction}
A classic result in the theory of continuous-time Markov chains (CTMCs) states that an irreducible chain on a countable space has a unique invariant distribution if and only if it is positive recurrent \cite{norris98}. An irreducible and positive recurrent CTMC is commonly called \emph{ergodic}. All finite irreducible Markov chains are automatically ergodic \cite{bremaud}, however verification of ergodicity becomes much harder when the state space is countably infinite. The most common approach to this problem is the use of the so-called Foster-Lyapunov criteria \cite{meyntweedie} which, among others, provide sufficient conditions for positive recurrence. According to one of the main results of this theory, the uniformly negative drift of a suitably defined Lyapunov function outside a finite set of states guarantees that the expected hitting time of this set is finite for any initial condition of the chain, which in turn implies that the chain is ergodic \cite{bremaud}. Despite its elegance, successful application of this result to a given CTMC depends critically on the computation of a Lyapunov function, a non-trivial procedure.

Several attempts have been made to provide guidelines for the construction of Lyapunov functions for specific classes of (discrete-time) Markov chains \cite{fayolle}, yet the general problem of determining a suitable Lyapunov function for a given system remains unsolved. In this work, we propose an optimization-based approach to the computation of Lyapunov functions when the transition rates of a given CTMC are affine functions of the state and the movement of the chain is determined by a finite set of transition vectors. Such characteristics can be found, for example, in models of stochastic chemical kinetics, as well as models used in ecology and epidemiology. Our method relies on the formulation of a semidefinite optimization program (SDP), which can be solved efficiently by existing SDP solvers.

Several classic results in Markov chain theory have demonstrated how Lyapunov functions can be used to provide bounds on stationary expectation of a function of the chain. Our approach enables us to optimize these bounds over a given class of Lyapunov functions. In this way, approximations of quantities related to the stationary behavior of the chain can be obtained fast, without stochastic simulation or solution of the Kolmogorov equations.

The rest of the paper is organized as follows: Section \ref{preliminaries} provides the necessary mathematical background of Foster-Lyapunov theory and the associated bounds that can be obtained using Lyapunov functions. Next, Sections \ref{optim_approach} and \ref{linearlyap} present the main idea behind our approach and its application to Markov chains with affine transition rates. Several examples are considered in Section \ref{examples} to demonstrate the applicability and effectiveness of our method. The conclusions of our study and some current research directions are finally summarized in Section \ref{discussion}.

\section{Foster-Lyapunov ergodicity criterion and associated bounds}\label{preliminaries}
We consider an irreducible CTMC $\{X(t),~t\geq 0\}$ with state space $S\subseteq \mathbb{N}^n_0$ (where $\mathbb{N}_0$ is the set of nonnegative integers). The infinitesimal generator of $X$ is denoted by $Q=(q(x,y))_{x,y\in S}$, where $q(x,y)$ denotes the transition rate from state $x$ to state $y$.  We assume that $Q$ is conservative ($\sum_{y\in S}q(x,y)=0,~\forall x\in S$) and that each state $x$ leads to finitely many states $y$. More specifically, we assume that the movement of the chain is controlled by a finite set of constant \emph{transition vectors} $r_1,~,r_2,\dots,r_m\in\mathbb{N}^n$, so that possible transitions out of $x\in S$ lead into states $y_1=x+r_1,~y_2=x+r_2,\dots,y_m=x+r_m$. Further, we assume that each transition rate $q(x,y)$ is affine in $x$.

In the rest of the paper, we will denote $q(x,x+r_k)$ by $q_k(x)$ and $\sum_{k=1}^mq_k(x)$ by $q(x)$. With this notation, application of the generator $Q$ to a function $V:S\to\mathbb{R}$ yields a new function $QV$, called the \emph{drift} of $V$, which is given by
\begin{align*} QV(x)&=\sum_{k=1}^m q_k(x)V(x+r_k)-q(x)V(x)\\&=\sum_{k=1}^mq_k(x)\left(V(x+r_k)-V(x)\right).\end{align*}

The following theorem provides sufficient conditions for a given irreducible CTMC on $S$ to be ergodic.

\begin{thm}[\cite{tweedie75}]\label{mainth}
Suppose there exists a function $V:S\to \mathbb{R}_+$ and a finite set $C\subset S$ such that\
\begin{align}
&QV(x)\leq -1,~\forall x\in S\setminus C \label{negdrift}\\
&QV(x)<+\infty~\forall x\in C \label{posdrift} \\
&\|x\|\to\infty \implies V(x)\to\infty\mbox{, where }\|x\|=\sum_{i=1}^nx_i
\end{align}
Then the chain is  non-explosive and ergodic.
\end{thm}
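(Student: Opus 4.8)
The plan is to prove this as a continuous-time analogue of the classical Foster-Lyapunov theorem, establishing two things: non-explosivity and ergodicity. The core idea is that a Lyapunov function with uniformly negative drift outside a finite set controls the chain's excursions and forces it to return to the set $C$ with finite expected time. I would first recall the characterization of ergodicity for an irreducible CTMC: it suffices to show positive recurrence, i.e., that the expected return time to some (any) state in the finite set $C$ is finite when started from any state in $S$.

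First I would address non-explosivity. Since $V \ge 0$ and $QV(x) \le -1$ outside $C$ (with $QV$ finite on $C$), the process $V(X(t)) + t$ is a supermartingale up to the hitting time of $C$ (more precisely, one shows $V(X(t)) - V(X(0)) - \int_0^t QV(X(s))\,ds$ is a local martingale via Dynkin's formula). The uniform bound $QV \le -1$ prevents $V$ from growing along trajectories, and combined with the coercivity condition $V(x) \to \infty$ as $\|x\| \to \infty$, this confines the chain to sublevel sets of $V$ on finite time intervals, ruling out explosion in finite time. I would make this rigorous by a localization/stopping-time argument, applying Dynkin's formula on the stopped process and letting the localizing sequence tend to the explosion time.

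Next, for positive recurrence, I would define $\tau_C = \inf\{t \ge 0 : X(t) \in C\}$ and use Dynkin's formula together with the drift condition. Applying the formula to the stopped process and using $QV(x) \le -1$ on $S \setminus C$ yields
\begin{equation*}
\mathbb{E}_x[V(X(t \wedge \tau_C))] - V(x) = \mathbb{E}_x\!\left[\int_0^{t \wedge \tau_C} QV(X(s))\,ds\right] \le -\,\mathbb{E}_x[t \wedge \tau_C].
\end{equation*}
Since $V \ge 0$, rearranging gives $\mathbb{E}_x[t \wedge \tau_C] \le V(x)$, and letting $t \to \infty$ via monotone convergence yields $\mathbb{E}_x[\tau_C] \le V(x) < \infty$ for every starting state $x$. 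Because $C$ is finite and the chain is irreducible, finiteness of the expected hitting time of $C$ from every state implies positive recurrence, hence ergodicity and the existence of a unique invariant distribution.

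The main obstacle I anticipate is making the Dynkin/supermartingale argument fully rigorous in the presence of a countably infinite state space, where a priori the chain could explode and Dynkin's formula does not apply unconditionally. The careful point is the interplay between the two parts: one cannot simply invoke Dynkin's formula to bound $\mathbb{E}_x[\tau_C]$ without first controlling explosion, yet the drift condition is exactly what yields both. I would therefore handle this with a joint localizing sequence of stopping times $T_k = \tau_C \wedge \inf\{t : \|X(t)\| \ge k\} \wedge k$, establish the inequality on $T_k$ where all expectations are manifestly finite, and then pass to the limit using the coercivity of $V$ to show the boundary-hitting times diverge, so that $T_k \uparrow \tau_C$ and the bound survives. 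Since this theorem is cited from \cite{tweedie75}, I expect the authors to state it without reproducing this standard but technical argument.
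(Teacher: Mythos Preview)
Your expectation is correct: the paper does not prove Theorem~\ref{mainth} at all, it merely cites it from \cite{tweedie75} and immediately moves on to its consequences. There is therefore no ``paper's own proof'' to compare against. Your sketch is the standard Foster--Lyapunov argument (localization to control explosion, Dynkin's formula on the stopped process to obtain $\mathbb{E}_x[\tau_C]\le V(x)$, then irreducibility plus finiteness of $C$ to conclude positive recurrence), and it is the right outline for a full proof; the only content the paper adds around this theorem is the reformulation \eqref{lyapeq} and the observation that $b\ge 1$.
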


Function $V$ is commonly called a \emph{stochastic Lyapunov function}. Note that nonnegativity of $V$ could alternatively be replaced by the condition that $V$ is lower-bounded over $S$ \cite{bremaud}. Conditions (\ref{negdrift}) and (\ref{posdrift}) can be combined in one inequality of the form
\begin{equation}\label{lyapeq}
QV(x)\leq -1+b\mathds{1}_C(x),
\end{equation}
where $\mathds{1}_C(\cdot)$ denotes the indicator function of $C$ and
\[b=\sup_{x\in C}QV(x)+1.\]

The following is a simple consequence of ergodicity:

\begin{lem}
$b\geq 1$ in \eqref{lyapeq}
\end{lem}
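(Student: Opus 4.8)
The plan is to exploit the existence of the stationary distribution guaranteed by ergodicity. Since the chain is irreducible and, by Theorem \ref{mainth}, ergodic, it possesses a unique invariant distribution $\pi$ on $S$ with $\pi(x)>0$ for every $x\in S$. The idea is to integrate the drift inequality \eqref{lyapeq} against $\pi$ and use the invariance relation $\pi Q = 0$ to reduce it to an inequality involving $b$ alone.

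First I would record the fundamental balance identity, which for a function $V$ in the domain of the generator reads
\[
\mathbb{E}_\pi[QV(X)]=\sum_{x\in S}\pi(x)\,QV(x)=0.
\]
Multiplying \eqref{lyapeq} by $\pi(x)$ and summing over $x\in S$ then gives
\[
0=\sum_{x\in S}\pi(x)\,QV(x)\leq -\sum_{x\in S}\pi(x)+b\sum_{x\in S}\pi(x)\mathds{1}_C(x)=-1+b\,\pi(C),
\]
since $\pi$ is a probability measure and $\pi(C)=\sum_{x\in C}\pi(x)$. Rearranging yields $b\,\pi(C)\geq 1$. Because $C$ is nonempty with $\pi(x)>0$ and $\pi$ is a probability measure we have $0<\pi(C)\leq 1$, so $b\geq b\,\pi(C)\geq 1$, which is the claim. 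In particular this also forces $b>0$.

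I expect the main obstacle to be the rigorous justification of the balance identity $\mathbb{E}_\pi[QV]=0$ when $V$ is unbounded, since the term-by-term summation need not converge absolutely. The saving feature is that \eqref{lyapeq} bounds the drift from above by the constant $b-1$, so $QV$ has no positive part that can blow up; combining $V\geq 0$ with Dynkin's formula and a monotone/Fatou argument (or simply invoking the comparison theorem of Foster-Lyapunov theory, which states that $QV\leq -f+b\mathds{1}_C$ with $f\geq 0$ implies $\pi(f)\leq b\,\pi(C)$, specialized here to $f\equiv 1$) makes the chain of inequalities precise. Alternatively, one could argue by contradiction without reference to $\pi$: if $b<1$ then \eqref{lyapeq} forces $QV(x)\leq -\delta$ for all $x\in S$, with $\delta=\min\{1,1-b\}>0$, and Dynkin's formula would give $0\leq\mathbb{E}_x[V(X_t)]\leq V(x)-\delta t\to-\infty$, a contradiction. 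The same integrability care for the drift term reappears in this route, which is why I would favour the stationary-measure argument and lean on the established comparison bound.
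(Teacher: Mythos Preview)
Your proposal is correct, and in fact you sketch two valid routes. Your preferred argument (integrate the drift inequality against the invariant measure, or equivalently invoke the comparison bound $\pi(f)\leq b\,\pi(C)$ with $f\equiv 1$) is not the one the paper uses: the paper argues directly by contradiction via a supermartingale argument, observing that if $\sup_{x\in S}QV(x)<0$ then $V(X(t))$ is a lower-bounded supermartingale and hence converges, which is incompatible with ergodicity of an irreducible chain on an infinite state space. That is essentially the second, Dynkin-based alternative you mention at the end.

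The trade-off is this: your stationary-measure route is slightly heavier in that it either needs the balance identity $\mathbb{E}_\pi[QV]=0$ for an unbounded $V$ (which, as you rightly flag, requires care) or leans on the Meyn--Tweedie comparison theorem as a black box; on the other hand it immediately yields the quantitative bound $\pi(C)\geq 1/b$, which the paper only states separately afterwards. The paper's supermartingale argument is more self-contained and avoids any integrability issue, but gives only the qualitative conclusion $b\geq 1$.
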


\begin{proof}
Since $V$ is bounded from below, $\sup_{x\in S}QV(x)<0$  would imply that that $V(X(t))$ is a supermartingale that converges to a constant \cite{syski92}, and the chain could not be ergodic. Hence, it must hold $\sup_{x\in S}QV(x)\geq 0$, i.e. $b\geq1$.
\end{proof}

Intuitively, Theorem \ref{mainth} states that for an irreducible CTMC the process $V(X(t))$ behaves like a supermartingale outside a finite set $C$, i.e. $V$ decreases on average along the trajectories of the chain until the process hits $C$, the so-called \emph{refuge set}. This interpretation makes an interesting connection with deterministic Lyapunov theory, and could lead one to think that $C$ possesses some special property among all subsets of $S$. However, this is not the case. To see this, we need two hitting time definitions \cite{norris98}:

\begin{newdef}
Given a set $A\subset S$, the first entrance time to $A$ (or the {\bf hitting time} of $A$), denoted by $\tau_A$, is defined as
\[\tau_A=\inf\{t:t\geq 0,X(t)\in A\},\]
with the infimum over an empty set taken to be $\infty$. Using $\tau_{A^c}$, the hitting time to $A^c$, we next define the {\bf first return time} to $A$, denoted by $\sigma_A$:
\[\sigma_A = \inf\{t:t>\tau_{A^c},X(t)\in A\}.\]
\end{newdef}

Given a positive recurrent CTMC, we know that the expected return time to any state, $\mathbb{E}_x[\sigma_x]$ is finite \cite{norris98}. In turn, this implies that $\mathbb{E}_y[\tau_x]$ is also finite for any $x$ and $y$ \footnote{This can be easily seen by defining the taboo transition probability ${}_xP(t,x,y)=\mathbb{P}[X(t)=y|X(0)=x,X(s)\neq x,0<s\leq t]$ and observing that $\mathbb{E}_x[\sigma_x]\geq{}_xP(t,x,y)(\mathbb{E}_y[\tau_x]+t)$. Since the chain is irreducible and $\mathbb{E}_x[\sigma_x]<\infty$, $\mathbb{E}_y[\tau_x]<\infty$ as well.}. Consequently, the expected hitting time, $\mathbb{E}_x[\tau_B]$, of any set $B\subset S$ as a function of $x$ is also finite. This implies that $V_B(x):=\mathbb{E}_x[\tau_B]$ for $B$ finite is a Lyapunov function for the chain, since we have the following

\begin{lem}\label{lemma_return}
The function $V_B(x)$ is the pointwise minimal solution to the system
\begin{align*}
&V_B(x) = 0,~x\in B\\
&QV_B(x)\leq -1,~x\notin B.
\end{align*}
\end{lem}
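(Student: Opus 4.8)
The plan is to establish the claim in two parts: first that $V_B$ is itself a solution of the system (feasibility), and then that it lies below every nonnegative solution (minimality).

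For feasibility, observe that $\tau_B = 0$ whenever $X(0) = x \in B$, so $V_B(x) = 0$ on $B$ immediately. For $x \notin B$ I would perform a first-step (first-jump) analysis. Starting from $x$, the chain holds at $x$ for an exponential time of rate $q(x)$ and then jumps to $x + r_k$ with probability $q_k(x)/q(x)$; by the strong Markov property the residual hitting time after this jump is distributed as $\tau_B$ started from the new state. Taking expectations -- all of which are finite by the discussion preceding the lemma -- yields the renewal-type identity
\[
V_B(x) = \frac{1}{q(x)} + \sum_{k=1}^m \frac{q_k(x)}{q(x)}\, V_B(x + r_k), \qquad x \notin B.
\]
Multiplying by $q(x)$ and rearranging gives exactly $QV_B(x) = -1$, which in particular satisfies the required inequality $QV_B(x) \le -1$.

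For minimality, let $W : S \to \mathbb{R}_+$ be any nonnegative function with $W(x) = 0$ on $B$ and $QW(x) \le -1$ off $B$. Rewriting the drift inequality in the same first-step form gives
\[
W(x) \ge \frac{1}{q(x)} + \sum_{k=1}^m \frac{q_k(x)}{q(x)}\, W(x + r_k), \qquad x \notin B.
\]
I would then iterate this inequality along the trajectory of the chain: unrolling it over the first $n$ jumps (stopping the recursion as soon as $B$ is reached) and discarding the nonnegative terminal contribution of $W$, one obtains $W(x) \ge \mathbb{E}_x[T_n \wedge \tau_B]$, where $T_n$ denotes the time of the $n$-th jump. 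Since the chain is non-explosive, $T_n \to \infty$ almost surely, so $T_n \wedge \tau_B \uparrow \tau_B$, and monotone convergence yields $W(x) \ge \mathbb{E}_x[\tau_B] = V_B(x)$. As $x$ was arbitrary, $V_B$ is pointwise minimal.

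The main obstacle is the limiting argument in the minimality step: because $W$ is only assumed nonnegative and may be unbounded, I cannot control the terminal term $\mathbb{E}_x[W(X(T_n \wedge \tau_B))]$ directly and must instead simply drop it using $W \ge 0$. This is precisely why nonnegativity of $W$ is indispensable, and why the bound is expressed through the truncated times $T_n \wedge \tau_B$ rather than through a single application of an optional-stopping/Dynkin identity at $\tau_B$, which would demand integrability of $QW$ along the whole path. Non-explosion, guaranteed for the chains under consideration, is exactly what forces $T_n \wedge \tau_B \to \tau_B$ and licenses the monotone-convergence passage to the limit.
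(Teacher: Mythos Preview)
Your feasibility argument is essentially identical to the paper's: both use a first-step decomposition to obtain the renewal identity and conclude $QV_B(x)=-1$ for $x\notin B$. For minimality, however, the paper simply invokes Theorem~4.3 of \cite{meyn93} as a black box (``if \eqref{lyapeq} holds for some $V$ with $C=B$, then $\mathbb{E}_x[\tau_B]\le V(x)$''), whereas you supply a direct, self-contained proof by iterating the drift inequality along the embedded jump chain, discarding the nonnegative terminal term, and passing to the limit via monotone convergence and non-explosion. Your route is more elementary and has the virtue of making transparent exactly where nonnegativity of $W$ and non-explosion of the chain enter; the paper's route is terser but defers all the work to the cited reference. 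Both are correct.
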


\begin{proof} Using a first step decomposition and the definition of $V_B$, we see that for $x\notin B$ \[V_B(x)=\sum_{k=1}^m\displaystyle\frac{q_k(x)}{q(x)}\left(\frac{1}{q(x)}+V_B(x+r_k) \right).\]
Application of the generator on $V_B(x)$ then gives \[QV_B(x)=\sum_{k=1}^mq_k(x)V_B(x+r_k)-q(x)V_B(x).\] Substituting the expression for $V_B(x)$ obtained above, we see that $QV_B(x)\leq -1$ if $x\notin B$.

Minimality of $V_B$ follows from Theorem 4.3 of \cite{meyn93}: if \eqref{lyapeq} holds for some $V$ and $C=B$, then
\[\mathbb{E}_x[\tau_B]\leq V(x),~\forall x\notin C.\]
\end{proof}

We thus see that any finite set $B$ can serve as refuge for at least one Lyapunov function. In other words, no set holds a prominent position in (\ref{lyapeq}), in contrast to deterministic Lyapunov theory. Consequently, there is a large freedom in the choice of stochastic Lyapunov functions. This freedom can be exploited, however, since we know that Lyapunov functions can provide bounds to stationary expectations of functions of the chain, as well as bounds on hitting times of sets (an example of the latter was already used in the proof of Lemma \ref{lemma_return}). One can therefore \emph{optimize} over a class of candidate Lyapunov functions to obtain such bounds, and thus gain insight into the stationary behavior of a given chain. Below we present more analytically the bounds we consider in this work.

\subsection{Bounds for stationary set probabilities}\label{probbound}
Consider the problem of finding a set $C$ that contains a large portion of the invariant distribution. Such a set defines a ``central'' region in the state space of the chain, in which it is most probable to find the sample paths at stationarity (notice the similarity of such a set to the equilibrium point of a nonlinear system). A simple calculation based on (\ref{lyapeq}) \cite{meyn93,dayar11} shows that
\begin{equation}\label{setbound}
\pi(C)\geq \frac{1}{b},
\end{equation}
where $\pi$ denotes the invariant distribution of the chain. This is intuitively expected, since $b-1$ indicates the average maximum positive rate of change of $V$ when $X(t)$ is in $C$, and thus quantifies the tendency of the process to move out of $C$. It is no surprise then that $b$ is intimately connected with $\pi(C)$, i.e. the fraction of time the process spends on average inside $C$ at stationarity.

For a given Lyapunov function, one can also reverse the process and find a set $C_\epsilon$ such that $\pi(C_\epsilon)\geq 1-\epsilon$. This will be some super-level set of $QV(x)$. More concretely, given a Lyapunov function $V$ and the associated constant $b$ in (\ref{lyapeq}), we know that $C=\{x:QV(x)\geq -1\}$. Next, given an $\epsilon$, we can find a $\delta\in\mathbb{R}$ such that the new super-level set $C_\epsilon=\{x:QV(x)\geq -1+\delta\}$ has probability greater than $1-\epsilon$. To do this, we first note that $QV(x)\leq b-1~\forall x\in S$ so, using $C_\epsilon$ in place of $C$ we can write
\[QV(x)\leq -1+\delta +(b-1+1-\delta)\mathds{1}_{C_\epsilon}=-1+\delta+(b-\delta)\mathds{1}_{C_\epsilon}.\]
Using Theorem 4.3 of \cite{meyn93}, we know that
\[\pi(C_\epsilon)\geq\frac{1-\delta}{b-\delta},\]
so we can compute the necessary shift $\delta$ from the equation  \[1-\epsilon=\frac{1-\delta}{b-\delta}.\]

\subsection{Moment bounds}\label{momentbounds}

Lyapunov functions can be also used to obtain moment bounds: As shown in \cite{meyn93}, if (\ref{lyapeq}) is generalized to
\begin{equation}\label{generalcondition}
QV(x)\leq -f(x)+b\mathds{1}_C(x),
\end{equation}
where $f\geq 1$, it also holds that
\begin{equation}\label{momentbound}
\pi(f)\leq b.
\end{equation}

From (\ref{generalcondition}) and (\ref{momentbound}) (proven in Theorem 4.3 of \cite{meyn93}), the stationary expectation of a given nonnegative function $f$ can be upper-bounded using the maximum drift obtained from a suitably chosen Lyapunov function $V$.

\section{An optimization approach to Lyapunov function design}\label{optim_approach}
The above observations can serve as starting points towards the construction of appropriate Lyapunov functions for bounding invariant quantities related to a given CTMC. To demonstrate the form of the resulting optimization problems, we consider the case of finding a set that contains a large fraction of the stationary probability mass of the chain. In other words, our goal is to obtain a lower bound on the probability mass of a set $C$, which is described as the (super)level set of the function $QV(x)$. In abstract terms, this can be posed as the following optimization problem:

\begin{equation}\label{lyapopt}
\begin{aligned}
	\underset{V\in\mathcal{V},~b'}{\text{min}}~~&b' \\
	\text{s.t.}~~ &QV\leq b', ~\forall x\in S\\
    &QV\leq-1,~\forall x\notin D\subset S,~D \mbox{ compact}\\
    & V\geq 0\mbox{ and }\|x\|\to\infty\implies V(x)\to\infty
	\end{aligned}
\end{equation}
$\mathcal{V}$ denotes some class of functions over which the optimum is sought. The second constraint requires that $QV$ eventually becomes negative outside a compact set $D$ and is necessary for $V$ to be a Lyapunov function. We then know that the set $C=\{x:QV(x)\geq -1\}$ will lie inside $D$. The intuition behind this formulation is to look for a function $V\in\mathcal{V}$ for which the positive drift over $D$ is minimal. Since the positive drift is directly related to the tendency of the process trajectories to move out of the refuge $C$, one expects that a refuge corresponding to a Lyapunov function with the minimal drift will be located in the region where the stationary density of $X$ tends to be higher.

As we shall see, in the case of affine transition rates \eqref{lyapopt} can be cast in the form of polynomial optimization, provided we treat all functions involved as defined on a continuous space ($\mathbb{R}^n_+$). In particular, \eqref{lyapopt} becomes a semidefinite program (SDP) if we focus our search on quadratic Lyapunov functions.

\section{Quadratic Lyapunov functions for CTMCs with affine transition rates}\label{linearlyap}
In the case of affine transition rates the \emph{drift vector} $d(x)=\sum_{k=1}^m q_k(x)r_k$ can be written as $d(x)=Ax+B$, for some $A\in\mathbb{R}^{n\times n}$ and $B\in\mathbb{R}^n$. For these systems we consider the class of quadratic Lyapunov functions $V=(x-x_0)^TR(x-x_0)$, for some $R>0$ and $x_0\in\mathbb{R}^n$, to be determined via optimization.

To derive the analytic form of the optimization problem, we take a closer look at the action of the generator $Q$ on this class of functions:
\begin{align*}
QV(x) = &\sum_{k=1}^mq_k(x)(V(x+r_k)-V(x))=\\
&=\sum_{k=1}^m q_k(x)((x+r_k)^TR(x+r_k)-\\-&2x_0^TR(x+r_k)-x^TRx+2x_0^TRx)=\\
&=2\sum_{k=1}^mq_k(x)r_k^TR(x-x_0)+\sum_{k=1}^mq_k(x)r_k^TRr_k=\\
&=2(Ax+B)^TR(x-x_0)+\sum_{k=1}^mq_k(x)r_k^TRr_k\\
&=2\left(x^TARx-x^TARx_0+B^TRx-B^TRx_0\right)+\\&+\sum_{k=1}^mq_k(x)r_k^TRr_k=\\
& =x^T(A^TR+RA)x-2x^TARx_0+2B^TRx-\\&-2B^TRx_0+\sum_{k=1}^mq_k(x)r_k^TRr_k.
\end{align*}
The decision variables in this expression are $R$ and $x_0$. Despite the fact that they appear in a product, we observe that $x_0$ always appears in a product with $R$. By defining $y_0=Rx_0$, we can optimize over $R$ and $y_0$, and recover $x_0$ whenever $R$ is invertible. When this condition fails, $V$ can be defined without the constant term $x_0^TRx_0$. This shift has no effect on our results.

We thus see that $QV$ is a quadratic function of the form $f(x)=x^TTx+2u^Tx+\beta$, for which we know that
\[f(x)\geq 0,~\forall x\Leftrightarrow \begin{bmatrix}T&u\\u&\beta\end{bmatrix}\geq 0.\]
Problem \eqref{lyapopt} then takes the form of an SDP, which can compactly be written as

\begin{equation}\label{lyapopt_lin}
\begin{aligned}
	\underset{R,~y_0,~b'}{\text{min}}~~&b' \\
	\text{s.t.}~~ &QV\leq b' ~\forall x\\
    & QV\leq-1,~\forall x\notin D, D \mbox{ compact}\\
    & R\geq 0\\
	\end{aligned}
\end{equation}

The second constraint requires that $QV$ eventually becomes negative outside a compact set $D$ and is necessary because the third constraint alone is not enough to guarantee that numerical solvers will not converge to the trivial solution $V\equiv 0$. In the case of Lyapunov function optimization for linear dynamical systems, where the stability of $x=0$ is typically studied, one can ensure non-degeneracy of solutions by requiring $V-\epsilon \sum_{i=1}^nx_i^2\geq 0$ for a small positive $\epsilon$. This would not be a good choice in our case, as $V$ is not expected to be homogeneous in $x$ (and thus have a minimum at zero), and such a constraint could severely affect the result of the optimization. Our second constraint requires the choice of a given compact set $D$, which can be thought of as an initial guess of where the set $C$ could lie. Provided it is not chosen too small, the optimization outcome will not be affected by the particular choice of $D$.

To maintain the SDP form of the problem, $D$ has to be defined through a set of linear or quadratic inequalities, in which case the second constraint can be written in semi-definite form using the S-procedure \cite{boyd94}.

\subsection{Moment bounds} Using the already established theoretical results presented in Subsection \ref{momentbounds}, we see that we obtaining upper bounds on the stationary mean of a given polynomial function $f\geq 1$ requires solving a problem very similar to  \eqref{lyapopt_lin}. Provided $f$ has a degree $\leq 2$  (i.e., if we seek to bound means and (co)variances), the resulting problem is still an SDP:

\begin{equation}\label{lyapopt_mom}
\begin{aligned}
	\underset{R,~b'}{\text{min}}~~&b' \\
	\text{s.t.}~~ &QV+f\leq b', ~\forall x\\
    & R\geq 0\\
	\end{aligned}
\end{equation}
The non-degeneracy constraint involving the set $D$ is no longer needed: such solutions are no longer possible, thanks to the presence of $f$ in the inequality.

\subsection{Nonlinear transition rates}
When a system contains quadratic or bilinear transition rates (such as in the case of bimolecular reactions in chemical kinetics), using a (general) quadratic Lyapunov function will result in third order polynomials in $QV$. However, when only a few transitions have this feature, one could still search for a quadratic Lyapunonv function by requiring that $Rr_{b}=0$, where $r_b,~b=1,\dots$ are the transition vectors corresponding to these transitions. There cannot be too many such transitions (in comparison to the state size, $n$), otherwise the only feasible solution will be $R=0$. Every constraint of the form $Rr_b=0$ restricts $V$ to be constant along the direction $r_b$, a severe restriction on the shape of the function. Note also that such a $V$ can no longer be positive definite, however it can still be lower-bounded on $\mathbb{N}^n_0$ and satisfy
\[ \|x\|\to\infty \implies V(x)\to\infty,~\mbox{when } x\geq 0,\]
which is enough for Theorem \ref{mainth} to hold (note that our chains evolve on $\mathbb{N}^n_0$). In turn, this is only possible if no $r_b$ has all its components nonnegative (otherwise $V$ would be zero along a direction inside the positive orthant). If the $r_b$'s arise from chemical reaction stoichiometries, they automatically have this property due to mass conservation: a bimolecular reaction must consume some reactants to generate the products.

\section{Examples}\label{examples}
\subsection{A simple gene expression model}
Consider the following reaction scheme corresponding to a simple transcription-translation model:
\begin{align*}
&\varnothing \xrightarrow{100} mRNA\xrightarrow{1}\varnothing\\
&\varnothing \xrightarrow{mRNA} Protein\xrightarrow{0.1}\varnothing\\
\end{align*}
The numbers of mRNA and protein molecules at time $t$ will be denoted by $M(t)$ and $P(t)$ respectively. This system can be modeled as a CTMC \cite{mazza14} $\{\left(M(t),P(t)\right),~t\geq 0\}$ on $\mathbb{N}^2_0$. At each state $(m,p)$, four transitions are possible; their transition vectors are $r_1=[1~0]^T$, $r_2=[-1~0]^T$, $r_3=[0~1]^T$ and $r_4=[0~-1]^T$, with corresponding transition rates $q_1=100$, $q_2=m$, $q_3=m$ and $q_4=0.1p$.

We are first going to search for a Lyapunov function that will provide us with the high-stationary density region of the system. To that end, we solve \eqref{lyapopt_lin} to obtain $b'$, $R$ and $y_0$, for $D=\{(m,p):(m-100)^2+(p-1000)^2\leq 10^5\}$ \footnote{Almost identical results are obtained with $D=\{(m,p):0\leq m\leq 10^4,~0\leq p\leq 10^4\}$ and $D=\{(m,p):m^2+p^2\leq 10^5\}$}. The optimal solution turns out to be
\[\begin{split}&R=\begin{bmatrix}0.0381&0.0096\\0.0096&0.0155\end{bmatrix}\cdot 10^{-11},~x_0=\begin{bmatrix}99.1\\1000.1\end{bmatrix},\\&b'=1.3\cdot 10^{-9}.\end{split}\]
Notice that $V=(x-x_0)^TR(x-x_0)$ is centered very close to the mean of the system ($\begin{bmatrix}100&1000\end{bmatrix}^T$). Following the method of Subsection \ref{probbound}, we can compute the level set of $QV$ which contains more than $\alpha$\% of the stationary mass of the chain. A few of those sets are displayed on Fig. \ref{mrna-prot} below, together with a logarithmic plot of the actual invariant distribution of the system, obtained from SSA. The innermost contour (corresponding to a lower bound of 80\%) encloses 99\% of the invariant probability.

\begin{figure}[h!tb]
\centering
  \includegraphics[width=0.9\columnwidth]{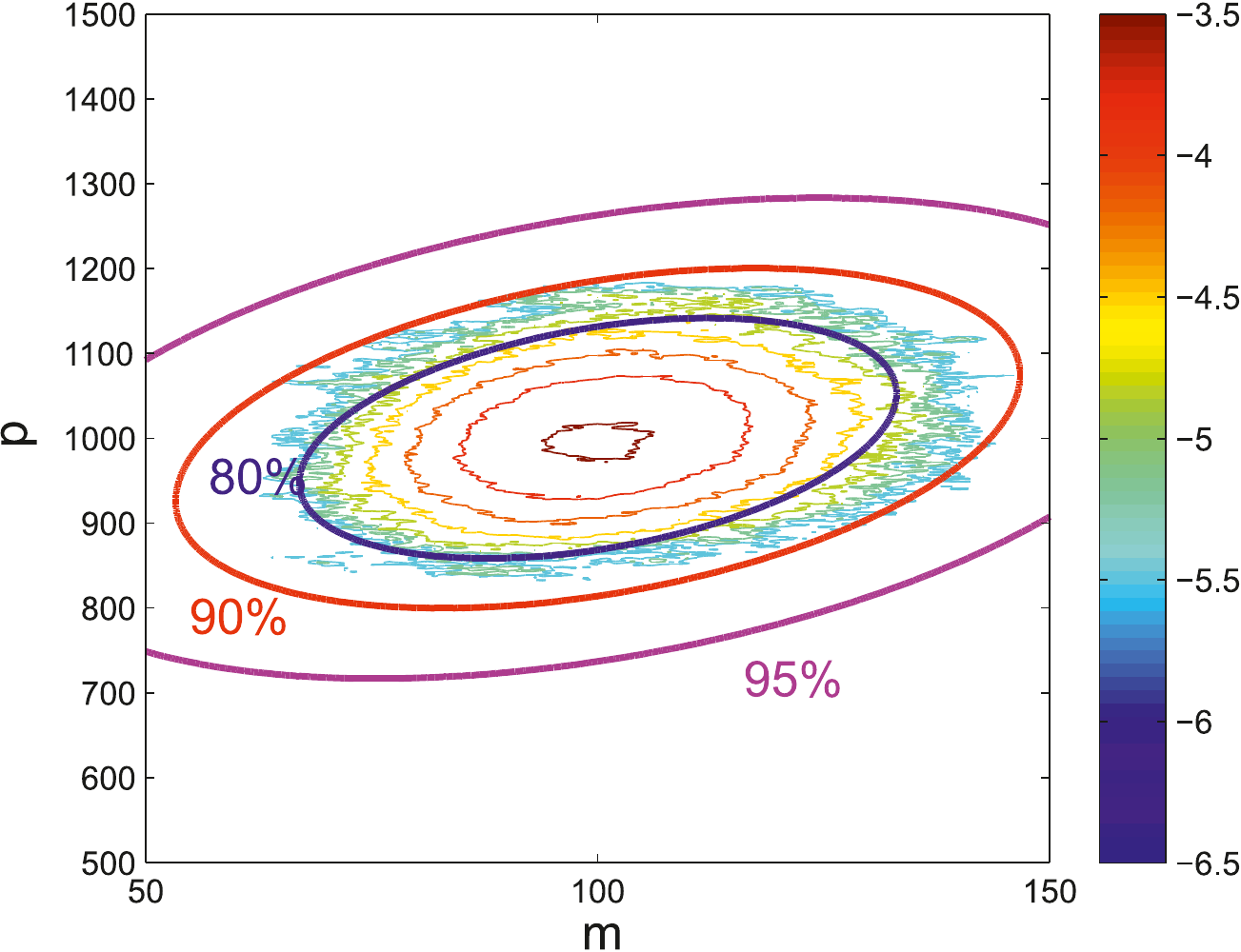}
  \caption{Contour lines of $QV$ overlayed to the logarithm (base 10) of the invariant distribution of the system, obtained from stochastic simulation. The percentages next to each line denote the optimization-based lower bound on the stationary mass enclosed by it.}
  \label{mrna-prot}
\end{figure}

While the lower bounds obtained are certainly conservative, we are nevertheless able to capture the region of high stationary density of the system with relatively good accuracy. On the other hand, this choice results in a very simple SDP that can be solved in 0.38 seconds using YALMIP \cite{YALMIP} with the SDPT3 solver in Matlab.

Turning to moment upper bounds, solution of problem \eqref{lyapopt_mom} for various choices of $f$ are presented in Table \ref{moments_mrna-prot} below.

\begin{table}[h!]
\centering
\caption{Upper moment bounds obtained from optimization problem \eqref{lyapopt_mom}, compared against their true values.}
\label{moments_mrna-prot}
  \begin{tabular}{| c | c | c |}
    \hline
    Function & Upper bound & Actual value \\ \hline
    $f=m$ & $100+9\cdot 10^{-5}$  & 100 \\ \hline
    $f=p$ & $1000+1\cdot 10^{-3}$ & 1000 \\ \hline
    $f=m^2$ & $10100$ & $10100$ \\ \hline
    $f=p^2$ & $1.002\cdot 10^6$ & $1.0019\cdot 10^6$ \\ \hline
    $f=m\cdot p$ & $1.002\cdot 10^5$ & $1.0009\cdot 10^5$ \\ \hline
  \end{tabular}
\end{table}

\subsection{A linear system with three species}
The system is described by the following reactions:
\begin{align*}
&\varnothing \xrightarrow{10} S_1\xrightarrow{0.1}\varnothing\\
&S_1 \xrightarrow{10} S_2\xrightarrow{0.1}\varnothing\\
&S_2 \xrightarrow{20} S_3\xrightarrow{0.1}\varnothing\\
&S_3 \xrightarrow{30} S_1\\
\end{align*}
We denote by $X_1(t)$, $X_2(t)$ and $X_3(t)$ the abundance at time $t$ of $S_1$, $S_2$ and $S_3$ respectively. The reader should hopefully be able to ``translate'' the reactions above into the corresponding transition vectors and rates, based on the presentation of the previous example.

Again, we first determine the region where the stationary density of this system is concentrated. Solution of \eqref{lyapopt_lin} with $D=\{(x_1,x_2,x_3):0\leq x_1\leq 1000,0\leq x_2\leq 1000,0 \leq x_3\leq 1000\}$ provides us with
\[R=\begin{bmatrix}0.225&0.221&0.222\\0.221&0.223&0.221\\0.222&0.221&0.224\end{bmatrix},~x_0=\begin{bmatrix}54.48\\25.9\\17.45\end{bmatrix},~b'=10^{-4}.\]
Again, we notice that $V=(x-x_0)^TR(x-x_0)$ is centered very close to the mean of the system ($\begin{bmatrix}54.7&27.21&18.08\end{bmatrix}^T$). The level set of $QV$ which contains more than 90\% of the stationary mass of the chain is the interior of the contour surface shown on Figure \ref{3dlin}.

\begin{figure}[h!tb]
\centering
  \includegraphics[width=0.9\columnwidth]{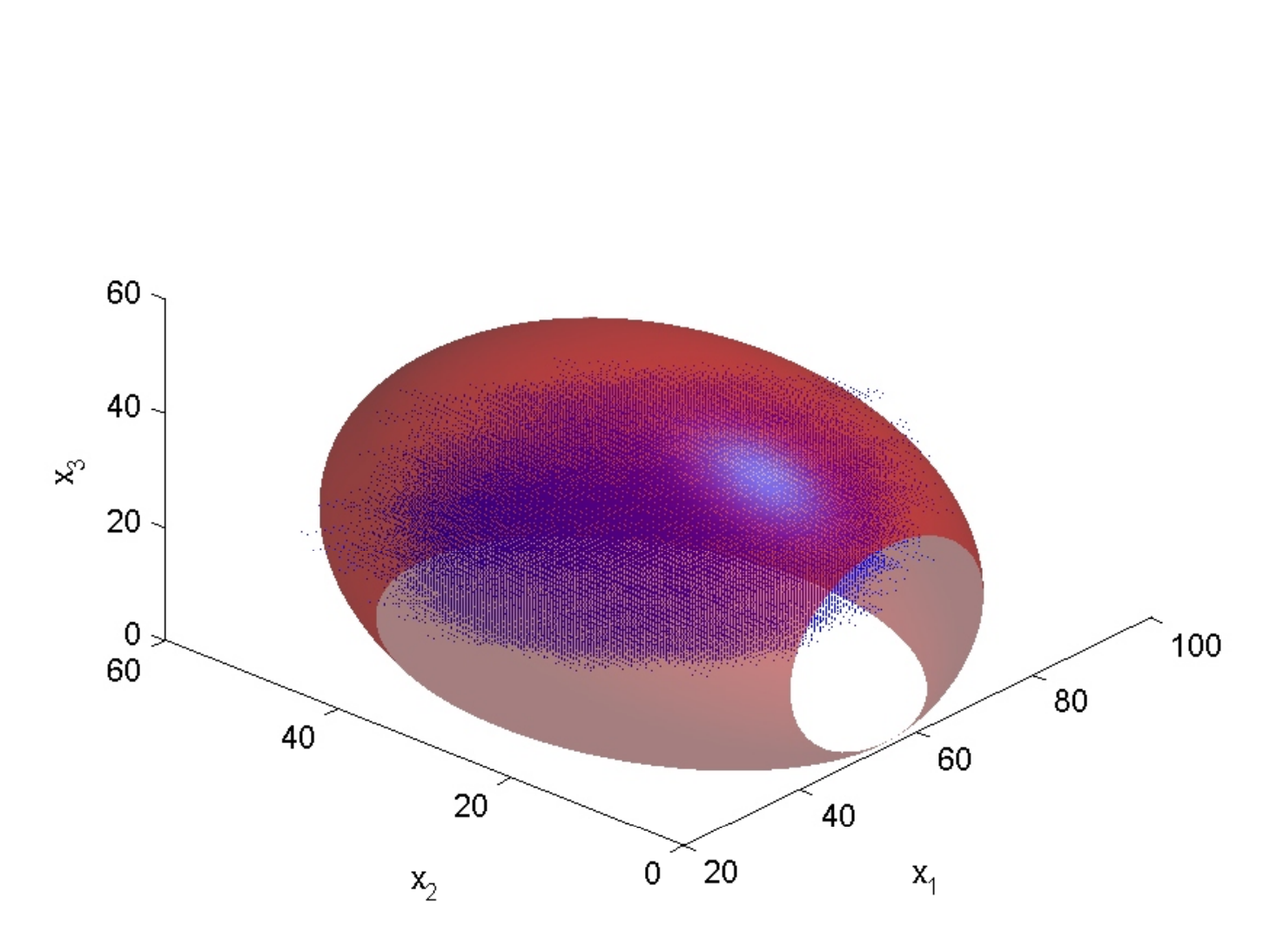}
  \caption{Contour surface of $QV$ corresponding to a 90\% lower bound probability. The actual stationary mass contained in the set is about 99\%. Blue dots mark the points visited by a long chain sample path at stationarity.}
  \label{3dlin}
\end{figure}

To demonstrate the relation of the level set size to the actual volume that the chain explores, the points that are visited by a long sample of the chain are also marked in the plot, as it is impossible to display graphically the contours of the stationary distribution in 3-D. We observe that the high-density region is captured quite well. Optimization took 0.5 sec with YALMIP and SDPT3.

The upper bounds for first- and second-order moments are displayed on Table \ref{moments_3d}.
\begin{table}[h!]
\centering
\caption{Upper moment bounds obtained from optimization problem \eqref{lyapopt_mom}, compared against their true values.}
\label{moments_3d}
  \begin{tabular}{| c | c | c |}
    \hline
    Function & Upper bound & Actual value \\ \hline
    $f=x_1$ & $54.70$  & 54.70 \\ \hline
    $f=x_2$ & $27.21$ & 27.21 \\ \hline
    $f=x_3$ & $18.08$ & $18.08$ \\ \hline
    $f=x_1^2$ & $3047.01$ & $3047.01$ \\ \hline
    $f=x_2^2\cdot p$ & $767.86$ & $767.86$ \\ \hline
    $f=x_3^2\cdot p$ & $345.07$ & $345.07$ \\ \hline
    $f=x_1\cdot x_2$ & $1506.85$ & $1488.71$ \\ \hline
    $f=x_1\cdot x_3$ & $1003.67$ & $989.17$ \\ \hline
  \end{tabular}

\end{table}

\subsection{A nonlinear system with three species}
Our final example is a system described by the following reaction scheme:
\begin{align*}
&\varnothing \xrightarrow{10} S_1\xrightarrow{1}\varnothing\\
&\varnothing \xrightarrow{10} S_2\xrightarrow{1}\varnothing\\
&S_1+S_2 \xrightarrow{1} S_3\xrightarrow{1}\varnothing,\\
\end{align*}
with $X$, $Y$ and $Z$ denoting the abundance of $S_1$, $S_2$ and $S_3$ respectively. Due to the presence of the bimolecular reaction, candidate quadratic Lyapunov functions for this system must satisfy ${R\cdot\begin{bmatrix}-1&-1&1\end{bmatrix}^T=0}$ for the resulting optimization problem to remain in SDP form. Under this constraint, the Lyapunov function that optimally determines the region of maximum stationary density is given by $V=x^TRx-2x^TRx_0$, where
\[R=\begin{bmatrix}0.26&-0.09&0.17\\-0.09&0.26&0.17\\0.17&0.17&0.34\end{bmatrix}\cdot 10^{-2},~Rx_0=\begin{bmatrix}0.016\\0.016\\0.033\end{bmatrix}.\]
Because $R$ is singular, $x_0$ cannot be determined separately. This poses no problem for our approach, as we explained in Section \ref{optim_approach}. The level set of $QV$ corresponding to a lower bound of 90\% is displayed on Figure \ref{nonl}. Optimization took 0.4 seconds in YALMIP with the SDPT3 solver.
\begin{figure}[h!tb]
\centering
  \includegraphics[width=0.9\columnwidth]{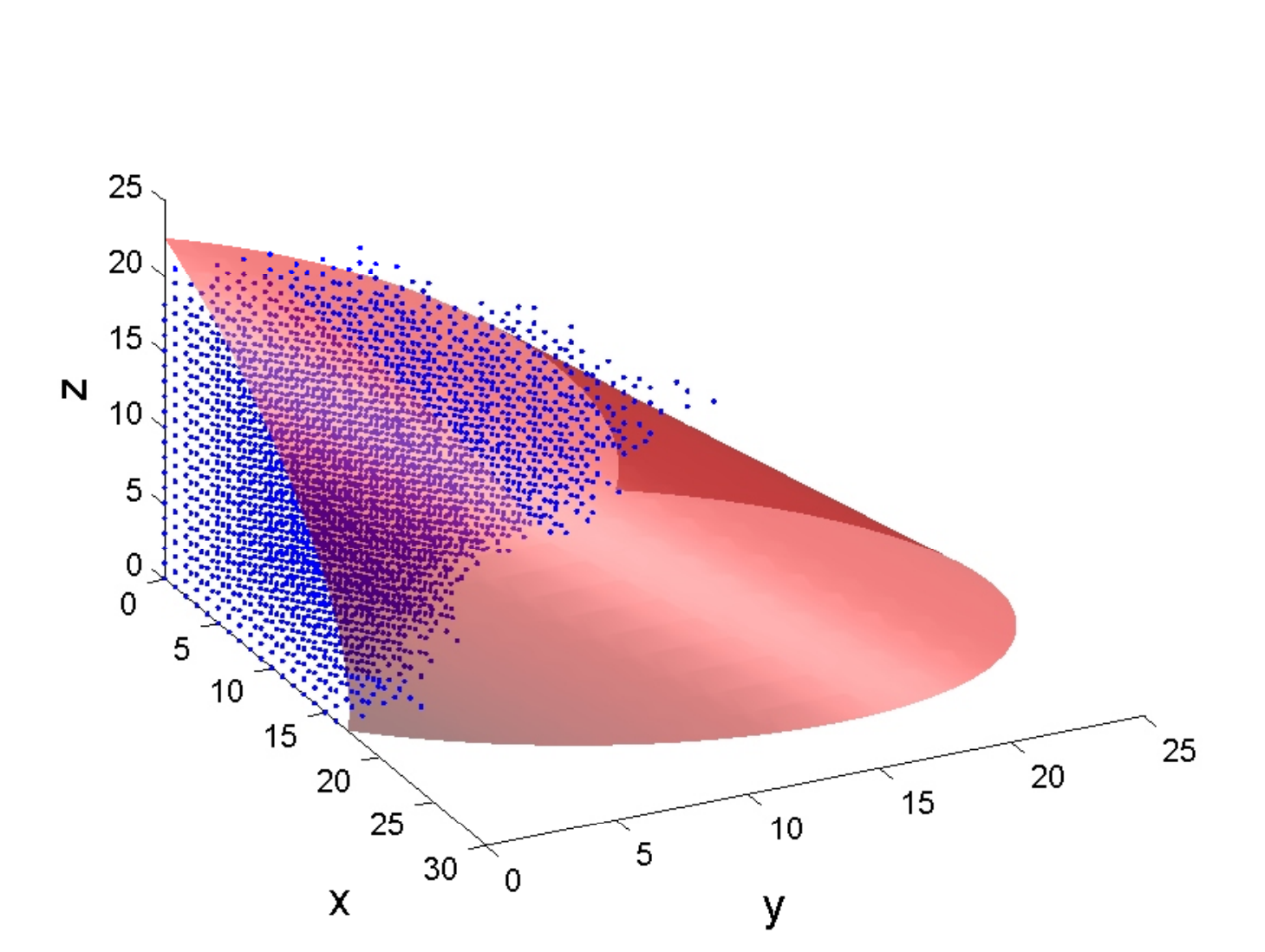}
  \caption{Contour surface of $QV$ corresponding to a 90\% lower bound probability. The actual stationary mass contained in the set is about 99.9\%. Blue dots mark the points visited by a long chain sample path.}
  \label{nonl}
\end{figure}

\section{Discussion \& Outlook}\label{discussion}
Apart from proving stability, Lyapunov functions for deterministic systems are useful in many different ways, for example in estimating the region of attraction of a given equilibrium point or providing convergence rates. In a similar fashion, the well-established theory of stochastic Lyapunov functions for Markov chains has turned them into useful tools for probing the stationary system behavior, besides determining ergodicity. In this work, we have presented an optimization-based approach to Lyapunov function design for locating regions of high stationary probability and bounding moments of CTMCs.

We have shown that even simple quadratic Lyapunov functions can capture a lot about the stationary behavior of a CTMC, and thus provide information about a system without the need for stochastic simulation. The results of this analysis can be used, for example, to determine suitably small truncations of the state space, on which approximate solution methods such as the Finite State Projection algorithm \cite{munsky06} can be applied.

Previous work has employed linear Lyapunov functions to determine exponential ergodicity of CTMCs, and shown that the search of such functions can often be reduced to the solution of a linear program \cite{briat13}. When the main goal is to determine system stability, linear Lyapunov functions can provide the simplest and most efficiently computable certificates. However, when additional system properties are of interest, linear functions are not sufficiently flexible to provide useful answers. For example, estimates of high-density regions cannot be tight enough using a linear Lyapunov function $V$, as the level sets of $V$ over the positive orthant are polyhedra with $n+1$ faces, $n$ of which lie on the coordinate axes. On the other hand, the level sets of quadratic function can be centered away from the origin, and thus provide much tighter estimates of this type.

Due to space limitations, we chose to present the main ideas of our approach only for chains with affine transition rates. One could argue that such systems can be easily studied using moment equations or even the closed form of their probability density, which is available in several -- but not all -- cases. We believe that one can take advantage of this simplicity to check the soundness of a new approach, before moving on to more complex systems with general polynomial transition rates. The presence of such rates leads to general polynomial optimization problems, that can be solved using sum-of-squares (SOS) relaxations \cite{parrilo00}, in the same spirit that polynomial Lyapunov functions are used to study the stability of polynomial dynamical systems \cite{papachristodoulou02}. While the main ideas of our approach remain the same in that case as well, the optimization problem setup becomes a bit more intricate, and will therefore be the topic of a future publication.

\bibliographystyle{IEEEtran}

\end{document}